\newtheorem{lemma}{Lemma}
\newtheorem{corollary}{Corollary}
\newtheorem{theorem}{Theorem}
\newtheorem{assumption}{Assumption}
\title{\LARGE \bf
A Feasible Sequential Linear Programming Algorithm with Application to Time-Optimal Path Planning Problems
}
\author{David Kiessling$^1$, Andrea Zanelli$^2$, Armin Nurkanovi\'c$^3$, Joris Gillis$^1$,\\ Moritz Diehl$^{3,\,4}$, Melanie Zeilinger$^2$, Goele Pipeleers$^1$, Jan Swevers$^1$
\thanks{This work has been carried out within the framework of Flanders
Make SBO DIRAC: DIRAC - Deterministic and Inexpensive Realizations of Advanced
Control. Flanders Make is the Flemish strategic research centre for the manufacturing industry.}
\thanks{$^1$ Department of Mechanical Engineering, KU
Leuven, and DMMS lab, Flanders Make, Leuven, Belgium
        {\tt\small david.kiessling@kuleuven.be}}%
\thanks{$^2$ Institute for Dynamic Systems and Control, ETH Zurich, Switzerland}
\thanks{$^3$ Department of Microsystems Engineering (IMTEK), University of Freiburg, 79110 Freiburg, Germany
        }%
\thanks{$^4$ Department of Mathematics, University of Freiburg, 79110 Freiburg, Germany}
}
\begin{document}
\maketitle
\thispagestyle{empty}
\pagestyle{empty}

\begin{abstract}
In this paper, we propose a Feasible Sequential Linear Programming (FSLP) algorithm applied to time-optimal control problems (TOCP) obtained through direct multiple shooting discretization. This method is motivated by TOCP with nonlinear constraints which arise in motion planning of mechatronic systems. The algorithm applies a trust-region globalization strategy ensuring global convergence. For fully determined problems our algorithm provides locally quadratic convergence. Moreover, the algorithm keeps all iterates feasible enabling early termination at suboptimal, feasible solutions. This additional feasibility is achieved by an efficient iterative strategy using evaluations of constraints, i.e., zero-order information. Convergence of the feasibility iterations can be enforced by reduction of the trust-region radius. These feasibility iterations maintain feasibility for general Nonlinear Programs (NLP). Therefore, the algorithm is applicable to general NLPs. We demonstrate our algorithm's efficiency and the feasibility update strategy on a TOCP of an overhead crane motion planning simulation case.
\end{abstract}

\section{Introduction}
\label{sec:introduction}
We aim at solving Nonlinear Programs (NLP) arising in time-optimal motion planning of mechatronic systems. The TOCP we are looking at in this paper are obtained by the direct multiple shooting discretization \cite{Bock1984} and written as follows
\begin{subequations}
\label{OCP}
\begin{align}
\min_{\substack{x_0,\ldots,\,x_N \\ u_0,\ldots,\,u_{N-1} \\ s_0,\,s_N,\,T}} &T+ \mu_0^{\top} s_0 +\mu_N^{\top} s_N\\
\mathrm{s.t.}\quad &-s_0 \leq x_0-\overline{x}_0 \leq s_0,\\
\quad &x_{k+1}=f(x_k,\,u_k,\,\tfrac{T}{N}), &&\hspace{-6.5mm}k=0,\dots,\,N-1,\\
\quad &u_k \in \mathbb{U}_k, &&\hspace{-6.5mm}k=0,\dots,\,N-1,\\
\quad &x_{k} \in \mathbb{X}_{k}, &&\hspace{-6.5mm}k=0,\dots,\,N,\\
\quad &e(x_k,\,u_k)\leq 0, &&\hspace{-6.5mm}k=0,\dots,\,N-1,\\
\quad &-s_N \leq x_N-\overline{x}_N \leq s_N.
\end{align}
\end{subequations}
where $x_k\in\mathbb{R}^{n_x},\,  u_k\in\mathbb{R}^{n_u},\,s_0,\,s_N\in\mathbb{R}^{n_x}$ denote the state, control, and slack variables for horizon length $N\in\mathbb{N}$. The time horizon is given by $T\in\mathbb{R}_{>0}$ and the multiple shooting time interval size is given by $h:=\frac{T}{N}$. We denote the start and end points by $\bar{x}_0,\,\bar{x}_N\in\mathbb{R}^{n_x}$. Let $f\colon \mathbb{R}^{n_x}\times\mathbb{R}^{n_u} \to \mathbb{R}^{n_x}$ and $e\colon\mathbb{R}^{n_x}\times\mathbb{R}^{n_u} \to \mathbb{R}^{n_e}$ denote the system dynamics and the stage constraints, respectively. Additionally, let $\mu_0,\,\mu_N\in\mathbb{R}^{n_s}_{> 0}$ denote penalty parameters and let $\mathbb{U}_k,\,\mathbb{X}_k$ denote convex polytopes.

Among other alternatives, Sequential Quadratic Programming (SQP) methods can be used to solve \eqref{OCP}. Due to the nonlinearities introduced in the system dynamics and motion planning constraints in \eqref{OCP} and the difficulty of finding a good initial guess, it is often required to introduce a globalization strategy that ensures global convergence of the iterates. For an overview of different globalization strategies for SQP methods we refer to \cite{Conn2000} and \cite{Nocedal2006}. These standard methods often require tuning of many parameters to ensure global convergence. Moreover, the iterates are in general infeasible, which can lead to inconsistent inputs in a real-time control context where an SQP method may need to be terminated early \cite{Tenny2004}.

In \cite{Wright2004}, the so-called Feasibility-Perturbed Sequential Quadratic Programming (FP-SQP) method was introduced. This method overcomes many of the aforementioned drawbacks by keeping all iterates feasible. This avoids the occurrence of infeasible, physically impossible states and controls. Retaining feasibility is achieved by an additional projection step onto the feasible set. A forward simulation of the system dynamics with the inputs of the SQP step can achieve feasibility with respect to the dynamic constraints. In \cite{Tenny2004}, a more advanced forward simulation is used that also ensures feasibility with respect to linear input and path constraints. In \cite{Bock2007}, feasibility iterations based on repeated evaluation of zero-order information were introduced in the context of real-time Nonlinear Model Predictive Control (NMPC) algorithms under the name "feasibility improvement iterates". In \cite{Zanelli2021}, a feasible SQP method was introduced that uses these feasibility iterations to ensure the feasibility of every SQP iterate. 

Due to the linear objective function in \eqref{OCP}, we propose a Feasible Sequential Linear Programming (FSLP) algorithm that maintains the global convergence properties of FP-SQP. SLP methods were first introduced in \cite{Griffith1961} and in the case that the optimal solution is fully determined by the active set of constraints, they achieve locally quadratic convergence \cite{Messerer2021}. 

The proposed algorithm consists of outer iterations calculating a standard SLP step and inner feasibility iterations projecting the step onto the feasible set. The inner iterations repeatedly solve Linear Programs (LP) and converge linearly towards a feasible point. During this process, only zero-order information of the constraints, i.e., their residuals need to be reevaluated. In contrast to \cite{Zanelli2021}, the convergence of the iterations is achieved by a trust-region. We emphasize that the feasibility iterations do not depend on the optimal control problem structure and are suitable for general NLP. The overall algorithm is free of second-order derivative information and solves only LPs. Therefore, the overall computational cost of the proposed algorithm is low.

The paper is structured as follows. In Section \ref{sec:FSLP}, the outer feasible sequential linear programming algorithm and its convergence behavior are presented. Section \ref{sec:innerFeasIter} discusses the inner feasibility iteration and the main local convergence results are derived. Simulation results on time-optimal motion planning of an overhead crane are presented in Section \ref{sec:numEx}. This paper concludes with Section \ref{sec:conclusion}.

\section{Feasible Sequential Linear Programming}
\label{sec:FSLP}
In this section, we first introduce the problem formulation and the notation. Then, we describe the outer feasible sequential linear programming algorithm and state its local and global convergence properties.
\subsection{Notation \& Preliminaries}
For simplicity of presentation, we bring \eqref{OCP} in a more general form. All state and control variables as well as the step size are stacked into a vector $y\in\mathbb{R}^{n_y}$. The slack variables in the TOCP are stacked in a vector $s_{\mathrm{OCP}}\in\mathbb{R}^{n_s}$. We introduce additional slack variables $s_{\mathrm{NLP}}\in\mathbb{R}^{n_z}$ to bring \eqref{OCP} in the desired problem structure and let $s:=(s_{\mathrm{OCP}},\,s_{\mathrm{NLP}})=[s_{\mathrm{OCP}}^{\top},\,s_{\mathrm{NLP}}^{\top}]^{\top}$. Then, the decision variable $w\in\mathbb{R}^{n_w}$ is defined by $w:=(y,\,s)$. The TOCP \eqref{OCP} can be reformulated in the following way
\begin{equation}
\label{nlp}
\begin{split}
\min_{w \in \mathbb{R}^{n_w}} \: c^{\top}w\quad\mathrm{s.t.} \:\: Cw + g(P_yw)=0,\:\: Aw+b\leq 0.
\end{split}
\end{equation}
Here, let $c\in\mathbb{R}^{n_w}$, $b\in\mathbb{R}^{n_b}$, $A\in\mathbb{R}^{n_b\times n_w}$ with full row rank $n_b$  and let $g\colon\mathbb{R}^{n_y}\to\mathbb{R}^{n_g}$ with $C\in\mathbb{R}^{n_g\times n_w}$. The projection matrix $P_y\in\mathbb{R}^{n_y\times n_w}$ is a sparse matrix, composed of rows with each a single 1-entry, that selects the non-slack variables of $w$. Additionally, we will always choose the minimal slack variables with respect to the objective function in the FSLP algorithm. That is
\begin{align*}
s^*(y) := \underset{s\in\mathbb{R}^{n_s+n_z}}{\mathrm{argmin}} \quad &c_s^{\top}s\\
\mathrm{s.t.}\quad &A_y y + A_s s + b \leq 0,\\
&C_y y +C_s s + g(y) = 0,
\end{align*}
with $A = (A_y\,|\,A_s),\, C = (C_y\,|\,C_s),\,c=(c_y,\,c_s)$. From this follows that there exists a constant $\xi>0$ such that
\begin{align}
\label{eq:equivalece}
\xi^{-1}\Vert w_1 - w_2 \Vert_{2} \leq \Vert	P_y(w_1-w_2)\Vert_{\infty} \leq \xi \Vert w_1 - w_2 \Vert_2
\end{align}
for all $w_1,\,w_2 \in \{w \in \mathbb{R}^{n_w} | y \in \mathbb{R}^{n_y}: w = (y,\,s^*(y))\}$.
We define the measure of infeasibility by
\begin{align}
\label{eq:infeasibilityMeasure}
h(w) := \Vert Cw + g(P_y w) \Vert_{\infty} + \Vert [Aw+b]^+ \Vert_{\infty}
\end{align}
where $[Aw+b]^+ := [\max\{A_iw+b_i,\,0\}]_{i=1}^{n_b}$. Let the Lagrangian be defined by $\mathcal{L}(w,\,\lambda,\,\pi) := c^{\top}w + g(x)^{\top} \lambda + (Aw+b)^{\top} \pi$ with Lagrange multiplier vectors $\lambda\in\mathbb{R}^{n_g}$ and $\pi\in\mathbb{R}^{n_b}$. The KKT-conditions for \eqref{nlp} are given by
\begin{equation}
\begin{split}
&\nabla_w \mathcal{L}(w,\,\lambda,\,\pi) = 0,\: Cw + g(P_yw)=0,\: Aw+b\leq 0\\ 
&\pi\geq 0,\quad\pi_i(A_iw+b_i)= 0,\quad\forall i=1,\ldots,\,n_b.
\end{split}
\end{equation}
The feasible set is denoted by $\mathcal{F} := \{w\in\mathbb{R}^{n_w}\,|\, Cw + g(P_y w)=0,\,Aw+b\leq 0\}$. In Section \ref{sec:numEx}, we make use of a more restrictive definition of feasibility. Let the \textit{zero slack feasible set} with respect to the OCP \eqref{OCP} be $\mathcal{F}_{\mathrm{OCP}}:=\{ w\in\mathbb{R}^{n_w}\, |\,w\in\mathcal{F},\, s_{\mathrm{OCP}}=0\}$.
For a given initial guess $\hat{w}_0\in\mathcal{F}$ we define the level set $L_0(\hat{w}_0)\subset \mathcal{F}$ by $L_0(\hat{w}_0) := \{w\in\mathbb{R}^{n_w}\,|\, w\in\mathcal{F},\,c^{\top}w\leq c^{\top}\hat{w}_0\}$. We denote the closed ball around $\hat{w}$ with radius $\gamma>0$ by $\mathcal{B}(\hat{w},\,\gamma):=\{w\: |\:\Vert\hat{w}-w\Vert_2\leq\gamma \}$. When it is not further specified $\Vert \cdot \Vert$ denotes the Euclidean norm.

\subsection{Description of the algorithm}
Here, we propose the FSLP algorithm which is a special case of the algorithmic framework FP-SQP \cite{Wright2004} that keeps all iterates feasible due to a \textit{feasibility perturbation step}. In \cite{Wright2004}, there is no specific feasibility perturbation technique given. We introduce a novel feasibility perturbation technique in Section \ref{sec:innerFeasIter}. Due to the trust-region used in FP-SQP, there are no strong requirements on the Hessian approximation. Therefore, choosing a zero matrix in the SQP subproblem yields an LP and transforms FP-SQP into an FSLP algorithm while keeping its global convergence theory valid. FSLP is described in Algorithm \ref{alg:FPSQP}.

\begin{figure}[H]
\begin{center}
\resizebox{0.25\textwidth}{!}{
\begin{tikzpicture}
\draw[dashed, -] (3, 0) -- (3, 2.5) node[left] {};
\draw[-{Latex[length=1mm, width=1.5mm]}] (0, 0) -- (3, 0) node[left] {};
\node (C) at (1.5,-0.3) {$\hat{w}$};
\draw[-{Latex[length=1mm, width=1.5mm]}] (0, 0) -- (3, 2.5) node[left] {};
\draw[-{Latex[length=1mm, width=1.5mm]}] (0, 0) -- (2, 2.2) node[left] {};
\node (C) at (0.85,1.5) {$\tilde{w}$};
\draw[dashed, -{Latex[length=1mm, width=1.5mm]}, red] (3, 2.5) -- (2, 2.2) node[right] {};
\node (C) at (2.1,1.25) {$\bar{w}$};
\node (D) at (3.9,-1.0) {$g(P_y w)=0$};
\draw[domain=-pi/8:pi/3, smooth, variable=\t] plot ({3*cos(\t r)}, {3*sin(\t r)});
\filldraw[black] (0,0) circle (2pt) node[anchor=north west]{$O$};
\end{tikzpicture}
}
\end{center}
\vspace{-5mm}
\caption{Visualization of feasibility perturbation.}
\label{fig:three steps}
\vspace{-4mm}
\end{figure}
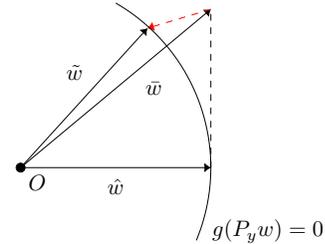

Let $\hat{w}\in\mathcal{F}$ the point of linearization, then the trust-region LP is defined as follows
\begin{subequations}
\label{FP-QP}
\begin{align}
\min_{w \in \mathbb{R}^{n_w}} \quad&c^{\top} w\\
\mathrm{s.t.}\quad &Cw + \nabla g(P_y\hat{w})^{\top}P_y (w-\hat{w}) = 0 \label{FP-QPequal},\\
\quad &Aw +b \leq 0\label{FP-QPinequal},\\
\quad &||P_y(w-\hat{w})||_{\infty} \leq \Delta.
\end{align}
\end{subequations}
After solving \eqref{FP-QP}, its solution $\bar{w}\in\mathbb{R}^{n_w}$ is projected onto the feasible set. The feasibility perturbed iterate is denoted by $\tilde{w}\in\mathbb{R}^{n_w}$. The connection of these three iterates is visualized in Fig. \ref{fig:three steps}. 
For the global convergence theory in \cite{Wright2004} to hold, it is required that $\tilde{w} \in \mathcal{F}$ and that the \textit{projection ratio} is below a certain threshold, i.e., there exists a continuous monotonically increasing function $\phi\colon[0,\,\Delta_{\max,\,2}]\to[0,\,\frac{1}{2}]$ with $\phi(0)=0$ such that 
\begin{align}
\label{eq:asymptoticExactness}
\frac{\Vert\bar{w} -\tilde{w}\Vert_2}{\Vert\bar{w}-\hat{w}\Vert_2} \leq \phi(\Vert\bar{w}-\hat{w}\Vert_2), 
\end{align}
where $\Delta_{\max,\,2}>0$ will be defined in Theorem \ref{theo:asExac}. As globalization strategy, a trust-region approach is used. Since every iterate remains feasible, the merit function is chosen as the objective. The ratio of actual to predicted reduction decides upon step acceptance or rejection. It is defined by
\begin{align}
\label{eq:tr_ratio}
\rho(\hat{w},\,\bar{w},\,\tilde{w}) = \frac{c^{\top}(\hat{w} - \tilde{w})}{c^{\top}(\hat{w}-\bar{w})}.
\end{align}
As termination criterion, we use the linear model of the objective function
\begin{align*}
m(\bar{w};\,\hat{w}) := c^{\top} (\bar{w}-\hat{w}).
\end{align*}
If the model does not decrease through a new LP iterate $\bar{w}$, an optimal point was found and the FSLP algorithm is terminated.
\begin{algorithm}[thpb]
\caption{Feasible Sequential Linear Programming}
\label{alg:FPSQP}
\begin{algorithmic}[1]
\REQUIRE initial point $\hat{w}_0\in\mathcal{F}$, projection matrix $P_y$, initial trust-region radius $\Delta_0 \in (0,\,\tilde{\Delta}]$, trust-region upper bound $\tilde{\Delta}\geq 1$, $\sigma \in (0, 1/4)$, $\sigma_{\mathrm{outer}}\in(0,10^{-5})$, $0<\alpha_1<1<\alpha_2 <\infty$, $0<\eta_1<\eta_2<1$;
\FOR{$k=0,\,1,\,2,\ldots$}
\STATE Obtain $\bar{w}_k$ by solving \eqref{FP-QP}
\IF{$\vert m(\bar{w}_k,\,\hat{w}_k)\vert \leq \sigma_{\mathrm{outer}}$}
\STATE \textbf{STOP}
\ENDIF
\STATE Seek $\tilde{w}_k\in\mathcal{F}$ that fulfills \eqref{eq:asymptoticExactness} with Algorithm \ref{alg:zoiterations}%
\IF{no such $\tilde{w}_k$ is found}
\STATE $\Delta_{k+1} \leftarrow \alpha_1 ||P_y(\bar{w}_k-\hat{w}_k)||_{\infty}$; $\hat{w}_{k+1} \leftarrow \hat{w}_k$
\ELSE
\STATE Calculate $\rho_k$ from \eqref{eq:tr_ratio}
\IF{$\rho_k < \eta_1$}
\STATE $\Delta_{k+1} \leftarrow \alpha_1 ||P_y(\bar{w}_k-\hat{w}_k)||_{\infty}$
\ELSIF{$\rho_k > \eta_2$ and $||P_y(\bar{w}_k-\hat{w}_k)||_{\infty} = \Delta_k$} 
\STATE $\Delta_{k+1} \leftarrow \min(\alpha_2 \Delta_k, \tilde{\Delta})$
\ELSE
\STATE $\Delta_{k+1} \leftarrow \Delta_k$
\ENDIF
\IF{$\rho_k > \sigma$}
\STATE $\hat{w}_{k+1} \leftarrow \tilde{w}_k$
\ELSE
\STATE $\hat{w}_{k+1} \leftarrow \hat{w}_k$
\ENDIF
\ENDIF
\ENDFOR
\end{algorithmic}
\end{algorithm}


\subsection{Global convergence of FP-SQP and FSLP}
The global convergence theory of FP-SQP can directly be applied to FSLP. In order to prove global convergence the following assumptions are made in \cite{Wright2004}:
\begin{assumption}
\label{ass:boundedL0}
For a given $\hat{w}_0$, the level set $L_0(\hat{w}_0)$ is bounded, and the function $g$ in \eqref{nlp} is twice continuously differentiable in an open neighborhood of $ L_0(\hat{w}_0)$.
\end{assumption}
\begin{assumption}
\label{ass:boundedDistance}
For every point $\hat{w}\in L_0(\hat{w}_0)$, there are constants $\zeta,\,\hat{\Delta}>0$ such that for all $w\in \mathcal{B}(\hat{w},\,\xi \hat{\Delta})$ we have 
\begin{align*}
\min_{v \in\mathcal{F}}\, \Vert v-w\Vert_2 \leq \zeta (\Vert g(w)\Vert_2 + \Vert [Aw+b]^+\Vert_2),
\end{align*}
where $\xi$ was defined in \eqref{eq:equivalece}.
\end{assumption}

The last assumption requires that the distance of $\bar{w}$ to the closest feasible point is bounded by the constraint violation of $\bar{w}$. This ensures the existence of a feasible point $\tilde{w}$ satisfying \eqref{eq:asymptoticExactness} \cite{Wright2004}. 

Since FSLP is a special case of FP-SQP we apply the following global convergence result:
\begin{theorem}[Global convergence of FP-SQP \cite{Wright2004}]
Suppose that Assumptions \ref{ass:boundedL0}, \ref{ass:boundedDistance} hold then all limit points of Algorithm \ref{alg:FPSQP} either are KKT points or else fail to satisfy the Mangasarian-Fromowitz constraint qualification.
\end{theorem}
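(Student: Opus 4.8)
The strategy is to observe that Algorithm \ref{alg:FPSQP} is nothing but the FP-SQP scheme of \cite{Wright2004} with the Hessian approximation in the SQP subproblem set to zero, so that the cited global convergence result applies verbatim once its hypotheses are matched to the present setting. The proof therefore splits into two parts: first, verifying that FSLP is a legitimate instance of FP-SQP, i.e. that every structural requirement imposed on the perturbation step and the trust-region management is met; and second, invoking the convergence theorem of \cite{Wright2004} with Assumptions \ref{ass:boundedL0} and \ref{ass:boundedDistance} as its premises.

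For the first part I would check three items. (i) The subproblem \eqref{FP-QP} is exactly the FP-SQP quadratic program with a vanishing quadratic term: linearizing the nonlinear equality as in \eqref{FP-QPequal}, keeping the linear inequality \eqref{FP-QPinequal} unchanged, and imposing the $\ell_\infty$ trust region on $P_y(w-\hat{w})$ reproduces the FP-SQP model, and with zero Hessian its objective reduces to the linear $c^\top w$. (ii) Each accepted iterate is feasible, $\tilde{w}_k \in \mathcal{F}$, which is enforced explicitly in line 6 of Algorithm \ref{alg:FPSQP} through the feasibility iteration (Algorithm \ref{alg:zoiterations}); consequently the objective $c^\top w$ may serve as the merit function, exactly as in FP-SQP. (iii) The perturbed step obeys the asymptotic-exactness bound \eqref{eq:asymptoticExactness}; this is delivered by the local analysis of Section \ref{sec:innerFeasIter} (Theorem \ref{theo:asExac}), while Assumption \ref{ass:boundedDistance} guarantees that a feasible point close enough to $\bar{w}_k$ exists in the first place. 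The branch in lines 7--8 closes the loop: whenever the feasibility iteration fails to return a $\tilde{w}_k$ satisfying \eqref{eq:asymptoticExactness}, the trust-region radius is contracted by $\alpha_1$ and the step is retried, which is precisely the safeguard FP-SQP requires.

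With these conditions in hand, the convergence argument proceeds along the usual trust-region lines, which I would outline as follows. Writing the predicted reduction $\mathrm{pred}_k := c^\top(\hat{w}_k-\bar{w}_k) = -m(\bar{w}_k;\hat{w}_k) \geq 0$ and the actual reduction $\mathrm{ared}_k := c^\top(\hat{w}_k-\tilde{w}_k)$, the ratio test \eqref{eq:tr_ratio} accepts a step only when $\rho_k > \sigma$, in which case $c^\top w$ strictly decreases. Asymptotic exactness \eqref{eq:asymptoticExactness} bounds $|\mathrm{ared}_k-\mathrm{pred}_k|$ by a quantity small relative to $\mathrm{pred}_k$ once the step is small, so that sufficiently small successful steps satisfy $\rho_k \to 1$ and are not rejected. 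Since $L_0(\hat{w}_0)$ is bounded by Assumption \ref{ass:boundedL0}, the monotonically decreasing sequence $c^\top \hat{w}_k$ is bounded below, hence the accumulated predicted reductions are summable and $\mathrm{pred}_k \to 0$ along accepted steps. A contradiction argument then shows that at any limit point $w^*$ the first-order stationarity measure furnished by the LP \eqref{FP-QP} must vanish: if $w^*$ were not a KKT point, the linearized feasible set would contain a descent direction yielding $\mathrm{pred}_k$ bounded away from zero for all nearby linearizations, the trust region would stop shrinking, and infinitely many successful steps with uniformly positive decrease would drive $c^\top w \to -\infty$, contradicting the lower bound. This is exactly where MFCQ enters: the equivalence between stationarity of the LP and the KKT conditions of \eqref{nlp} requires the linearized cone to faithfully represent the feasible geometry at $w^*$, which holds under MFCQ; hence the only way a limit point can evade being a KKT point is by violating MFCQ, giving the stated dichotomy.

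The main obstacle, and the reason the theorem is more than a mechanical citation, is establishing item (iii) robustly: one must be sure that for every $\hat{w}_k \in L_0(\hat{w}_0)$ and all sufficiently small $\Delta_k$ the feasibility perturbation both terminates at a point of $\mathcal{F}$ and satisfies the contraction \eqref{eq:asymptoticExactness}. Assumption \ref{ass:boundedDistance}, together with the norm equivalence \eqref{eq:equivalece} relating $\|\cdot\|_2$ to $\|P_y(\cdot)\|_\infty$ on the minimal-slack manifold, supplies the geometric bound that makes this possible, but verifying that the specific zero-order iteration of Section \ref{sec:innerFeasIter} realizes it, and that the radius-reduction safeguard in lines 7--8 never cycles indefinitely, is the substantive work deferred to the subsequent sections. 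The remaining degenerate situation, in which MFCQ fails at a limit point, is not resolved but is correctly excluded in the statement.
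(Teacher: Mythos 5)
Your proposal is correct and follows essentially the same route as the paper, which gives no standalone proof but argues exactly as in your first part: the trust-region globalization of FP-SQP places no requirements on the Hessian approximation, so setting it to zero makes FSLP a special case of FP-SQP, the feasibility-perturbation safeguard (lines 7--8 of Algorithm \ref{alg:FPSQP}) and condition \eqref{eq:asymptoticExactness} match the framework of \cite{Wright2004}, and the cited theorem then applies under Assumptions \ref{ass:boundedL0} and \ref{ass:boundedDistance}. Your additional sketch of the internal trust-region contradiction argument and the role of MFCQ is a faithful recap of the cited proof rather than a different approach, and your closing caveat---that constructively realizing \eqref{eq:asymptoticExactness} is deferred (in the paper, to Section \ref{sec:innerFeasIter} under Assumption \ref{as:invertiblePLP})---is exactly how the paper organizes the material.
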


\subsection{Local convergence of FSLP}
In the following, we show that in the case where the solution of an NLP is fully determined by the active constraints, following \cite{Kim2016} and \cite{Messerer2021}, we obtain local quadratic convergence if the iterates are not projected on the feasible set. In practice, local quadratic convergence is often obtained for projected iterates as we show in section \ref{sec:numEx}. In the fully determined case, solving the NLP \eqref{nlp} is equivalent to finding a feasible point to the active constraints. Applying the classical Newton method to this root-finding problem yields quadratic convergence \cite{Messerer2021}. If an LP has at least one solution, then at least one solution lies in a vertex of the feasible set. In the fully determined case, the NLP solution is locally unique and lies in a vertex of the feasible set. Therefore, the unprojected iterates of FSLP converge quadratically. We adopt the convergence result of \cite{Messerer2021} to our algorithm.
\begin{corollary}[Local quadratic convergence]
Assume $z^*=(\hat{w}^*,\,\lambda^*,\pi^*) = (\hat{w}^*,\,\vartheta^*)$ is a KKT point of \eqref{nlp}, at which LICQ and strict complementarity hold. If $\hat{w}^*$ is fully determined by the active constraints, then $\hat{w}^*$ is a local minimizer. Additionally, there exists a trust-region radius $\tilde{\Delta}>0$ and a $\bar{k}\in\mathbb{N}$ such that for all $\hat{w}_k$ with $\Vert P_w(\hat{w}_k-\hat{w}^*)\Vert\leq \tilde{\Delta}$ and $k>\bar{k}$, then FSLP converges Q-quadratically in the primal variable $\hat{w}$, and R-quadratically in the dual variable $\vartheta$, i.e., there are constants $c_1,\,c_2\in\mathbb{R}_+$ such that
\begin{align*}
&\Vert \hat{w}_{k+1}-\hat{w}^*\Vert \leq c_1\Vert \hat{w}_k-\hat{w}^*\Vert^2\\
\mathrm{while}\quad&\Vert\vartheta_{k+1}-\vartheta^*\Vert \leq c_2\Vert \hat{w}_k-\hat{w}^*\Vert.
\end{align*}
\end{corollary}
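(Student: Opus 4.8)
The plan is to reduce the local behaviour of FSLP, run in its unprojected variant $\hat{w}_{k+1}=\bar{w}_k$ as announced before the statement, to a classical Newton iteration on a square nonlinear system, and then to transfer the standard Newton estimate of \cite{Messerer2021}. First I would collect the constraints active at $\hat{w}^*$: writing $\mathcal{A}^* := \{\, i \mid A_i\hat{w}^* + b_i = 0 \,\}$ for the active inequality indices and keeping the always-active equality constraints, define
\begin{align*}
F(w) := \begin{pmatrix} Cw + g(P_y w) \\ A_{\mathcal{A}^*}\,w + b_{\mathcal{A}^*} \end{pmatrix}.
\end{align*}
Full determination of $\hat{w}^*$ by its active constraints means their number equals $n_w$, so $F\colon\mathbb{R}^{n_w}\to\mathbb{R}^{n_w}$ is square, and LICQ makes the Jacobian $DF(\hat{w}^*)$ nonsingular; hence $\hat{w}^*$ is an isolated root of $F$. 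I would also record that $\hat{w}^*$ is a strict local minimizer, since full determination together with strict complementarity forces the critical cone at $\hat{w}^*$ to be $\{0\}$, so the second-order sufficient conditions hold vacuously.

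The crux is to show that for $\hat{w}_k$ sufficiently close to $\hat{w}^*$ the LP solution $\bar{w}_k$ of \eqref{FP-QP} is exactly one Newton step for $F(w)=0$. This rests on two facts. First, by strict complementarity and continuity of the LP data in $\hat{w}_k$, the optimal vertex of \eqref{FP-QP} has active set $\mathcal{A}^*$, so $\bar{w}_k$ is characterised by the linearised active equality and inequality constraints; using $\hat{w}_k\in\mathcal{F}$ this system is precisely $DF(\hat{w}_k)(\bar{w}_k-\hat{w}_k)=-F(\hat{w}_k)$, that is $\bar{w}_k=\hat{w}_k-DF(\hat{w}_k)^{-1}F(\hat{w}_k)$. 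Second, the trust-region bound $\|P_y(\bar{w}_k-\hat{w}_k)\|_\infty\le\Delta_k$ must be inactive at this vertex: since the Newton step length is $O(\|\hat{w}_k-\hat{w}^*\|)$ and the linear model becomes asymptotically exact, the ratio $\rho_k\to 1$, full steps are accepted, and $\Delta_k$ stays bounded below by some $\tilde{\Delta}>0$, so the Newton step eventually fits strictly inside the trust region.

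With $\bar{w}_k$ identified as the Newton iterate, I would close the argument by invoking the local Newton theorem as in \cite{Messerer2021}: Assumption \ref{ass:boundedL0} makes $g$ twice continuously differentiable, hence $DF$ is locally Lipschitz and nonsingular near $\hat{w}^*$, giving the Q-quadratic primal bound $\|\hat{w}_{k+1}-\hat{w}^*\|\le c_1\|\hat{w}_k-\hat{w}^*\|^2$. For the multipliers $\vartheta_{k+1}=(\lambda_{k+1},\pi_{k+1})$ I would use that, under LICQ, they are the unique solution of the linear stationarity system of \eqref{FP-QP}, whose data are Lipschitz in $\hat{w}_k$ and $\bar{w}_k$; this yields $\|\vartheta_{k+1}-\vartheta^*\|\le c_2\|\hat{w}_k-\hat{w}^*\|$, which combined with Q-quadratic primal convergence is exactly R-quadratic convergence of the duals. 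I expect the main obstacle to be this crux step, namely establishing simultaneously the correct active-set identification of the LP vertex and the inactivity of the trust region, since it couples the combinatorial structure of the LP with the trust-region acceptance mechanism.
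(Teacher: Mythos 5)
Your proposal is correct and follows essentially the same route as the paper: the paper's proof merely observes that the trust-region radius keeps the iterates in the region of local convergence and then defers entirely to \cite{Messerer2021}, whose argument is precisely the reduction you carry out, namely that in the fully determined case the unprojected LP step coincides with a Newton step on the square root-finding system of active constraints, yielding Q-quadratic primal and R-quadratic dual convergence. The details you supply (active-set identification of the LP vertex, inactivity of the trust-region bound, and Lipschitz dependence of the multipliers on the primal data) are exactly what the paper outsources to that citation, so there is no substantive difference in approach.
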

\begin{proof}
Due to the trust-region radius $\tilde{\Delta}$ all iterates $\hat{w}_k$ for $k>\bar{k}$ are in the region of local convergence around $\hat{w}^*$. Then, the proof follows from \cite{Messerer2021}.
\end{proof}

\section{Inner Feasibility Iterations}
\label{sec:innerFeasIter}
In this section, we present the inner feasibility iterations to obtain a feasible step $\tilde{w}\in\mathcal{F}$ from the LP step $\bar{w}$. This is the main algorithmic contribution of this paper. After introducing the algorithm, we propose a termination heuristic for efficient implementation and we prove local convergence of the inner iterates towards a feasible point. In particular, we show that this step fulfills the projection ratio condition \eqref{eq:asymptoticExactness}.
\subsection{Description of the algorithm}
The feasibility iterations are closely related to second-order corrections which were first introduced in \cite{Fletcher1982} in order to avoid slow convergence in globalized SQP methods. In fact, if the algorithm is started at $\bar{w}$ the very first feasibility iteration coincides with a second-order correction. Further iterations perform higher-order corrections of the constraints.
Our approach is very similar to \cite{Zanelli2021}, but it differs in the use of a trust-region as a means to impose local convergence in the inner iterations and global convergence in the outer iterations. Let $\hat{w}\in\mathcal{F}$ be the outer iterate and $w_l\in\mathbb{R}^{n_w}$ be an inner iterate for a given inner iterate counter $l\in\mathbb{N}$. We fix the Jacobian of $g$ at $P_y\hat{w}$, i.e., $G^{\top} := \nabla g(P_y\hat{w})^{\top}P_y$, and define
\begin{align}
\delta(w_l,\,\hat{w}) := g(P_y w_l)-g(P_y \hat{w}) - G^{\top}(w_l-\hat{w}).
\end{align}
Let $\delta_l = \delta(w_l,\,\hat{w})$, then we define the parametric linear program $\mathrm{PLP}(\delta_l;\,\hat{w},\Delta)$ as
\begin{subequations}
\label{PLP}
\begin{align}
\min_{w \in \mathbb{R}^{n_w}} \quad&c^{\top}w\\
\mathrm{s.t.}\quad &\delta_l +Cw + G^{\top} (w-\hat{w}) = 0,\label{eq:ZOnleq}\\
\quad &Aw +b \leq 0,\\
\quad &||P_y(w-\hat{w})||_{\infty} \leq \Delta.
\end{align}
\end{subequations}
Its solution will be denoted by $w_{\mathrm{PLP}}^*(\delta_l;\,\hat{w},\Delta)$. We note that for $w_l \leftarrow \hat{w}$, we obtain LP \eqref{FP-QP} and for $w_l \leftarrow \bar{w}$, we obtain a standard second-order correction problem as defined in \cite{Conn2000} or \cite{Nocedal2006}. The algorithm is based on the iterative solution of PLPs. In every iteration the nonlinear constraints are re-evaluated at $w_l$ in the term $\delta_l$ and another PLP is solved. The solution of this problem is denoted by $w_{l+1}$ and the procedure is repeated. We will show that for $l\to\infty$ the limit of the sequence $\{w_l\}_{l\in\mathbb{N}}$ is $\tilde{w}\in\mathcal{F}$. Algorithm \ref{alg:zoiterations} presents this feasibility improvement strategy in detail.
\begin{algorithm}[thpb]
\caption{Inner Feasibility Iterations}
\label{alg:zoiterations}
\begin{algorithmic}[1]
\REQUIRE $\hat{w}\in\mathcal{F}$, fixed Jacobian $G^{\top}=\nabla g(\hat{w})^{\top}P_y$. Let $\bar{w}$ be the solution of \eqref{FP-QP} at $\hat{w}$. $n_{\mathrm{watch}}\in\mathbb{N}$, $\kappa_{\mathrm{watch}}<1$, $\sigma_{\mathrm{inner}}\in (0,\,10^{-5})$;
\ENSURE $\tilde{w}$
\STATE $w_0\leftarrow \bar{w}$
\FOR{$l=0,\,1,\,2, \ldots$}
\IF{$h(w_l) \leq \sigma_{\mathrm{inner}}$ and $\Vert \bar{w}-w_l\Vert / \Vert \bar{w}-\hat{w}\Vert < 1/2$}
\STATE $\tilde{w}\leftarrow w_l$
\STATE \textbf{STOP}
\ENDIF
\STATE Solve $\mathrm{PLP}(\delta_l,\,\hat{w},\,\Delta)$
\IF{iterates $w_l$ are diverging according to subsection \ref{sec:TerminationHeuristic}}
\STATE \textbf{STOP}
\ENDIF
\STATE $w_{l+1}\leftarrow w_{\mathrm{PLP}}^*(\delta_l;\,\hat{w},\Delta)$
\ENDFOR
\end{algorithmic}
\end{algorithm}
The feasibility iterations are repeated until convergence towards a feasible point of \eqref{nlp} is achieved. If the iterates are diverging the algorithm is terminated, the inner algorithm returns to the outer algorithm Algorithm \ref{alg:FPSQP}, and the trust-region radius is decreased. This strategy is presented in detail in the following subsection \ref{sec:TerminationHeuristic}.

Particularly, in every iteration of Algorithm \ref{alg:zoiterations}, the constraints are re-evaluated. This is advantageous in applications where the evaluation of first- and second-order derivatives is expensive. Assuming that the solutions of the PLPs do not differ much, the solution of the feasibility problem can be computed quite cheaply by using an active set solver \cite{Zanelli2021}.

The subsequent sections focus on the termination heuristic for Algorithm \ref{alg:zoiterations} and on its local convergence behavior. We state that the limit of the feasibility iterations is indeed a feasible point. Afterwards, we prove local convergence and the satisfaction of the projection ratio condition \eqref{eq:asymptoticExactness}.

\subsection{Termination heuristic}
\label{sec:TerminationHeuristic}
For an efficient FSLP algorithm, it is crucial to stop the inner feasibility iterations early if the iterates are not converging. We recall that the optimal solution must be feasible, i.e., $w^*\in\mathcal{F}$ and that the projection ratio condition \eqref{eq:asymptoticExactness} must be satisfied. The termination heuristic consists of the following steps.

As shown in Section \ref{sec:numEx}, the projection ratio does not change much for converging feasibility iterations. Therefore, we observe the projection ratio for all inner iterates, i.e., $\Vert \bar{w} - w_l\Vert/\Vert\bar{w}-\hat{w}\Vert$. If it is higher than $1.0$, the inner iterations are aborted and the trust-region radius is decreased.

Additionally, we estimate the contraction rate $\kappa$ of the algorithm with the following formula
\begin{align}
\kappa_l = \frac{\Vert w_{l+1}-w_l\Vert}{\Vert w_l-w_{l-1}\Vert}.
\end{align}

The convergence of the inner iterations is observed through a watchdog strategy. After $n_{\mathrm{watch}}$ inner iterations, the contraction rate for these $n_{\mathrm{watch}}$ steps is checked, if it is not below a threshold $\kappa_{\mathrm{watch}}$ the iterations are aborted. The projection ratio is also checked. If it is not below $0.5$, the algorithm also terminates.

If the feasibility measure $h(w_l)$ is below a feasibility tolerance $\sigma_{\mathrm{inner}}$, the algorithm terminates. If the maximum number of iterations $n_{\mathrm{max}}$ are reached without achieving convergence, the inner iterations are aborted and the trust-region radius is decreased.

\subsection{Limit of feasibility iterations}
For completeness of presentation, we state a result about the limit of the feasibility iterations, first derived in \cite{Bock2007}:
\begin{lemma}[Limit of feasibility improvement]
Assume that for fixed $\hat{w}$, the sequence of feasibility iterates $\{w_l\}_{l\in\mathbb{N}}$ converges towards a $w^*$, and let $\pi^*$ be the corresponding Lagrange multipliers of \eqref{eq:ZOnleq} in $w^*$. Then $w^*$ and $\pi^*$ belong to a KKT point of the problem
\begin{subequations}
\label{ZOP}
\begin{align}
\min_{w\in\mathbb{R}^{n_w}}\quad &(c + (G-P_y^{\top} \nabla g(P_y w^*))\pi^*)^{\top} w\\
\mathrm{s.t.}\quad&Cw + g(P_y w) = 0,\quad Aw +b \leq 0,\\
& \Vert P_y(w-\hat{w})\Vert_{\infty}\leq \Delta.
\end{align}
\end{subequations}
\end{lemma}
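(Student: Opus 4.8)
The plan is to characterize the limit $(w^*,\pi^*)$ through the stationarity conditions of the programs $\mathrm{PLP}(\delta_l;\hat{w},\Delta)$ and then let $l\to\infty$, showing that the resulting relations are exactly the KKT conditions of \eqref{ZOP}. Since the feasible set of \eqref{PLP} is polyhedral (writing the trust region $\|P_y(w-\hat{w})\|_{\infty}\leq\Delta$ as the $2n_y$ affine inequalities $-\Delta\mathbf{1}\leq P_y(w-\hat{w})\leq\Delta\mathbf{1}$), no constraint qualification is needed and the KKT conditions of each PLP are necessary and sufficient. I denote by $\pi_{l+1}$ the multiplier of the equality \eqref{eq:ZOnleq}, by $\mu_{l+1}\geq 0$ the multiplier of $Aw+b\leq 0$, and by $\zeta_{l+1}\geq 0$ the multipliers of the trust-region inequalities (whose rows I collect in a matrix $D$), all associated with $w_{l+1}=w^*_{\mathrm{PLP}}(\delta_l;\hat{w},\Delta)$.

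First I would write the KKT system of $\mathrm{PLP}(\delta_l;\hat{w},\Delta)$ at $w_{l+1}$. Because the Jacobian of \eqref{eq:ZOnleq} with respect to $w$ is the constant matrix $C+G^{\top}$, stationarity reads
\begin{equation*}
c + (C^{\top}+G)\pi_{l+1} + A^{\top}\mu_{l+1} + D^{\top}\zeta_{l+1} = 0,
\end{equation*}
together with primal feasibility, $\mu_{l+1},\zeta_{l+1}\geq 0$, and complementarity. Next I would pass to the limit: by assumption $w_l\to w^*$, hence $w_{l+1}\to w^*$ and $\delta_l\to\delta(w^*,\hat{w})$ by continuity of $g$; the equality multipliers converge to the $\pi^*$ of the statement, while $\{\mu_{l+1}\}$ and $\{\zeta_{l+1}\}$ are bounded (the active constraint normals remain linearly independent near $w^*$, so LP duality confines the duals to a bounded polytope) and converge along a subsequence to $\mu^*\geq 0$, $\zeta^*\geq 0$. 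Taking limits yields
\begin{equation}
c + (C^{\top}+G)\pi^* + A^{\top}\mu^* + D^{\top}\zeta^* = 0, \label{eq:limitstat}
\end{equation}
with the limiting complementarity and sign conditions.

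For primal feasibility I would substitute the definition of $\delta_l$ into \eqref{eq:ZOnleq} at $w_{l+1}$ and let $l\to\infty$: the frozen-Jacobian increments vanish as $w_{l+1}-w_l\to 0$, so the linearized equality collapses onto the exact nonlinear one and gives $Cw^*+g(P_yw^*)=0$; together with $Aw^*+b\leq 0$ and $\|P_y(w^*-\hat{w})\|_{\infty}\leq\Delta$ inherited in the limit, this shows $w^*$ is feasible for \eqref{ZOP}. The crux is then matching \eqref{eq:limitstat} with the stationarity of \eqref{ZOP}. The true equality $Cw+g(P_yw)=0$ has Jacobian $C+\nabla g(P_yw^*)^{\top}P_y$ at $w^*$, so its stationarity with multipliers $\pi^*,\mu^*,\zeta^*$ is
\begin{equation*}
\big(c+(G-P_y^{\top}\nabla g(P_yw^*))\pi^*\big) + (C^{\top}+P_y^{\top}\nabla g(P_yw^*))\pi^* + A^{\top}\mu^* + D^{\top}\zeta^* = 0,
\end{equation*}
and the shift $(G-P_y^{\top}\nabla g(P_yw^*))\pi^*$ built into the cost of \eqref{ZOP} is precisely what cancels the $\pm P_y^{\top}\nabla g(P_yw^*)\pi^*$ terms, reducing the left-hand side to $c+(C^{\top}+G)\pi^*+A^{\top}\mu^*+D^{\top}\zeta^*$, which vanishes by \eqref{eq:limitstat}. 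Hence $(w^*,\pi^*,\mu^*,\zeta^*)$ satisfies every KKT condition of \eqref{ZOP}. The main obstacle I anticipate is the limit argument for the inequality and trust-region multipliers, i.e. securing their boundedness and convergence so that \eqref{eq:limitstat} is licensed; the objective-gradient cancellation that produces the modified cost of \eqref{ZOP}, though the conceptual heart of the result, is then a short algebraic verification.
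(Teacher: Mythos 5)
Your proposal is correct and follows essentially the same route as the paper, whose proof is stated in one line as a ``comparison of the KKT-conditions of \eqref{PLP} and \eqref{ZOP}'' in the fashion of the cited Bock et al.\ reference: you write the KKT system of each $\mathrm{PLP}(\delta_l;\,\hat{w},\Delta)$, pass to the limit $l\to\infty$, recover primal feasibility of the nonlinear equality, and verify that the gradient shift $(G-P_y^{\top}\nabla g(P_y w^*))\pi^*$ in the objective of \eqref{ZOP} exactly cancels the mismatch between the frozen Jacobian $G$ and the true Jacobian at $w^*$. The only point the paper leaves implicit that you rightly flag and handle (via dual boundedness and a convergent subsequence) is the convergence of the inequality and trust-region multipliers, so your write-up is a faithful, more detailed version of the paper's argument rather than a different one.
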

\begin{proof}
The proof follows by comparison of the KKT-conditions of \eqref{PLP} and \eqref{ZOP} in the same fashion as in \cite{Bock2007}.
\end{proof}
\subsection{Local convergence of feasibility iterations}
\label{subsec:localconvergence}
Next, we provide a proof of the local convergence of the feasibility iterations. In order to prove local contraction, we make use of the following assumption which is in the context of generalized equations referred to as strong regularity \cite{Robinson1980}.
\begin{assumption}[Strong regularity]
\label{as:invertiblePLP}
For all $\hat{w}\in\mathcal{F}$ exist $L_1,\, L_2,\,\bar{\Delta}>0$ such that for all $\Delta\leq\bar{\Delta}$ and for all $\delta_1,\,\delta_2\in \mathcal{B}(0,\,L_2\,\Delta)$ it holds that
$
\Vert w_{\mathrm{PLP}}^*(\delta_1;\,\hat{w},\Delta)-w_{\mathrm{PLP}}^*(\delta_2;\,\hat{w},\Delta)\Vert \leq L_1 \Vert \delta_1-\delta_2\Vert.
$
\end{assumption}

We note that $w_{l+1}=w_{\mathrm{PLP}}^*(\delta_l;\,\hat{w},\Delta)$.
Then, we can state the local contraction result:
\begin{theorem}[Local linear convergence proportional to $\Delta$]
\label{theo:localContractionJosephyNewton}
Let $\hat{w}\in\mathcal{F}$ and let Assumption \ref{as:invertiblePLP} hold, then there exist $\Delta_{\mathrm{max},\,1},\,L>0$ such that for all $\Delta\leq\Delta_{\mathrm{max},\,1}$ the iterates $\{w_l\}_{l\in\mathbb{N}}$ of Algorithm \ref{alg:zoiterations} converge towards a point $w^*\in\mathcal{F}$ and the contraction rate is proportional to $\Delta$, i.e.,
$
\Vert w_{l+1}-w^*\Vert \leq L \Delta \Vert w_{l}-w^*\Vert.
$
\end{theorem}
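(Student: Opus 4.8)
The plan is to read the inner iteration as a fixed-point iteration and to establish a contraction estimate whose modulus is proportional to $\Delta$; the Banach fixed-point theorem then delivers convergence, the limit $w^*$, and the stated per-step bound in one stroke. Concretely, define the map $\Phi(v):=w_{\mathrm{PLP}}^*(\delta(v,\hat w);\,\hat w,\Delta)$, so that Algorithm \ref{alg:zoiterations} generates $w_{l+1}=\Phi(w_l)$ with $w_0=\bar w$. Because every $\mathrm{PLP}$ solution (and also $\bar w$, the FP-QP solution) satisfies the trust-region constraint $\Vert P_y(w-\hat w)\Vert_\infty\le\Delta$, all iterates lie in $\mathcal{T}:=\{w=(y,s^*(y)):\Vert P_y(w-\hat w)\Vert_\infty\le\Delta\}$, which through the minimal-slack parametrization and the norm equivalence \eqref{eq:equivalece} is a closed, bounded, hence complete subset of $\mathbb{R}^{n_w}$. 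It therefore suffices to show that $\Phi$ is a self-map of $\mathcal{T}$ that contracts with factor $L\Delta<1$.

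The key estimate chains Assumption \ref{as:invertiblePLP} with a second-order Taylor bound on $\delta$. For $v_1,v_2\in\mathcal{T}$, strong regularity gives $\Vert\Phi(v_1)-\Phi(v_2)\Vert\le L_1\Vert\delta(v_1,\hat w)-\delta(v_2,\hat w)\Vert$, provided both arguments lie in $\mathcal B(0,L_2\Delta)$. Now $\delta(\cdot,\hat w)$ is exactly the first-order Taylor remainder of $v\mapsto g(P_yv)$ at $\hat w$, and by Assumption \ref{ass:boundedL0} $g$ is twice continuously differentiable on a neighborhood of the bounded level set, so for $\Delta$ small enough that $\mathcal{T}$ sits inside that neighborhood its Hessian is bounded there by some $M>0$. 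Writing $\delta(v_1,\hat w)-\delta(v_2,\hat w)=\int_0^1[\nabla g(P_yv_2+tP_y(v_1-v_2))-\nabla g(P_y\hat w)]^\top P_y(v_1-v_2)\,dt$ and using $\Vert P_y(v_i-\hat w)\Vert_\infty\le\Delta$ together with \eqref{eq:equivalece} to pass between the $\ell_\infty$ and Euclidean norms, one obtains $\Vert\delta(v_1,\hat w)-\delta(v_2,\hat w)\Vert\le c_\delta\,\Delta\,\Vert v_1-v_2\Vert$ for a constant $c_\delta$ built from $M$, $\xi$, and the dimension. Combining the two bounds yields $\Vert\Phi(v_1)-\Phi(v_2)\Vert\le L_1 c_\delta\,\Delta\,\Vert v_1-v_2\Vert=:L\Delta\,\Vert v_1-v_2\Vert$.

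It remains to pin down $\Delta_{\mathrm{max},\,1}$. The same Taylor argument, now with $v_2=\hat w$, gives $\Vert\delta(v,\hat w)\Vert\le c_\delta'\,\Delta^2$ for every $v\in\mathcal{T}$, so choosing $\Delta$ small enough that $c_\delta'\,\Delta\le L_2$ guarantees $\delta(v,\hat w)\in\mathcal B(0,L_2\Delta)$ and legitimizes the use of Assumption \ref{as:invertiblePLP}; imposing in addition $\Delta\le\bar\Delta$ and $L\Delta<1$ defines $\Delta_{\mathrm{max},\,1}$. For $\Delta\le\Delta_{\mathrm{max},\,1}$ the map $\Phi$ sends $\mathcal{T}$ into itself (the trust-region constraint holds on every PLP solution) and is a contraction, so the Banach fixed-point theorem furnishes a unique fixed point $w^*\in\mathcal{T}$ with $w_l\to w^*$ and, directly, $\Vert w_{l+1}-w^*\Vert=\Vert\Phi(w_l)-\Phi(w^*)\Vert\le L\Delta\Vert w_l-w^*\Vert$. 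Feasibility $w^*\in\mathcal F$ then follows from the preceding lemma on the limit of feasibility improvement: since the sequence converges, its limit is a KKT point of \eqref{ZOP} and in particular satisfies $Cw^*+g(P_yw^*)=0$ and $Aw^*+b\le0$.

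The main obstacle I anticipate is the second-order estimate on $\delta$ and, above all, the careful bookkeeping between the two norms at play: the Euclidean norm in which strong regularity and the desired contraction are phrased, and the $\ell_\infty$ norm on $P_yw$ in which the trust region is measured. This is exactly where the equivalence \eqref{eq:equivalece}, and hence the minimal-slack selection $s^*(y)$, is indispensable, since it both converts the $\Vert P_y(\cdot)\Vert_\infty$ bounds into $\Vert\cdot\Vert_2$ bounds and makes $\mathcal{T}$ a complete set on which Banach's theorem applies. A secondary point requiring attention is the self-map property, i.e.\ that $\Phi$ does not leave $\mathcal{T}$; this is immediate because the trust-region constraint is imposed inside every PLP, but it must be stated explicitly for the fixed-point argument to close.
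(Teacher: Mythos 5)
Your proposal is correct and rests on exactly the same two ingredients as the paper's proof --- the strong-regularity Lipschitz bound $\Vert w_{\mathrm{PLP}}^*(\delta_1;\hat w,\Delta)-w_{\mathrm{PLP}}^*(\delta_2;\hat w,\Delta)\Vert\le L_1\Vert\delta_1-\delta_2\Vert$ chained with a fundamental-theorem-of-calculus estimate on the Taylor remainder $\delta$, using that every point in the trust region is within $O(\Delta)$ of the linearization point $\hat w$ --- but you package them differently. The paper estimates directly against the limit: it writes $\Vert w_{l+1}-w^*\Vert=\Vert w_{\mathrm{PLP}}^*(\delta_l;\hat w,\Delta)-w_{\mathrm{PLP}}^*(\delta^*;\hat w,\Delta)\Vert\le L_1\Vert\delta_l-\delta^*\Vert\le L_1\tilde L\,\Delta\,\Vert w_l-w^*\Vert$, which presupposes that the fixed point $w^*$ exists and is itself a PLP solution; existence of the limit is not separately argued. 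You instead prove that $\Phi(v)=w_{\mathrm{PLP}}^*(\delta(v,\hat w);\hat w,\Delta)$ is a contraction between \emph{arbitrary} pairs of points and invoke the Banach fixed-point theorem, which buys you what the paper's proof quietly assumes: existence and uniqueness of $w^*$, plus an explicit verification (via $\Vert\delta(v,\hat w)\Vert\le c_\delta'\Delta^2\le L_2\Delta$) that the perturbations stay inside the ball $\mathcal B(0,L_2\Delta)$ where Assumption \ref{as:invertiblePLP} applies --- a hypothesis check the paper skips. Your argument is thus strictly more complete on these points. One wrinkle to repair: your complete set $\mathcal T=\{(y,s^*(y)):\Vert P_y(w-\hat w)\Vert_\infty\le\Delta\}$ is not obviously invariant under $\Phi$, since a PLP solution carries slacks optimal for the \emph{linearized} constraints with offset $\delta_l$, not the nonlinearly defined $s^*(y)$; the self-map claim as stated does not close. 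The fix is easy and makes the proof cleaner: since $\delta(v,\hat w)$ depends only on $P_yv$ and strong regularity controls the full vector $w$ including slacks, you can run Banach on the closed (not necessarily bounded) set $\{w:\Vert P_y(w-\hat w)\Vert_\infty\le\Delta,\;Aw+b\le0\}$, which is complete by closedness alone --- boundedness, and hence the minimal-slack parametrization and \eqref{eq:equivalece}, are not actually needed for completeness, only for the $\ell_\infty$-to-Euclidean conversions in the Taylor estimate.
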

\begin{proof}
Let $\Delta \leq \bar{\Delta}$. Using the definition of the solution of $\mathrm{PLP}(\delta;\,\hat{w},\Delta)$ and Assumption \ref{as:invertiblePLP} yields
$
\Vert w_{l+1} - w^* \Vert = \Vert w_{\mathrm{PLP}}^*(\delta_{l};\,\hat{w},\Delta) - w_{\mathrm{PLP}}^*(\delta^*;\,\hat{w},\Delta) \Vert
\leq L_1 \Vert \delta_l - \delta^*\Vert.
$
For simplicity of notation we define $\tilde{G} \colon w \mapsto P_y\nabla g(P_y w)$ and observe $\tilde{G}(\hat{w})=G$. We note that by applying the fundamental theorem of calculus and using Lipschitz continuity of $\tilde{G}$ with Lipschitz constant $\tilde{L}>0$ we obtain
\begin{align*}
&\Vert\delta_l - \delta^*\Vert = \Vert g(P_y w_l)-g(P_y w^*)-G^{\top}(w_l-w^*)\Vert\\
&\leq \int_0^1 \Vert(\tilde{G}(w^* + t(w_l-w^*))-G)^{\top}\Vert\:\Vert w_l-w^*\Vert \mathrm{d}t\\
&\leq \int_0^1 \tilde{L} \Vert w^* + t(w_l-w^*) -\hat{w}\Vert \mathrm{d}t\:\Vert w_l-w^*\Vert\\
&\leq \tilde{L}\,\Delta\,\Vert w_l-w^*\Vert.
\end{align*}
Defining $L=L_1\, \tilde{L}$ yields
\begin{align*}
\Vert w_{l+1} - w^* \Vert \leq L \Delta\,  \Vert w_{l}-w^*\Vert.
\end{align*}
From this follows $\Delta_{\mathrm{max},\,1}<\frac{1}{L}$.
\end{proof}
\subsection{Projection ratio}
\label{subsec:asExac}
In this section, we provide a proof that condition \eqref{eq:asymptoticExactness} about the projection ratio holds for the optimal solution of the inner feasibility iterations. We note that $w_0=\hat{w}$, $w_1=\bar{w}$, and $w_l\to\tilde{w}$, i.e., $\hat{w}=w^*$. For an illustration of the three different iterates, we refer to Fig. \ref{fig:three steps}.
\begin{theorem}[Projection ratio]
\label{theo:asExac}
Let Assumption \ref{as:invertiblePLP} hold. Moreover, let $\tilde{w}$ be the limit of Algorithm \ref{alg:zoiterations}, $\bar{w}$ be the solution of \eqref{FP-QP}, and let $\hat{w}$ be the point of linearization, then there exists $\Delta_{\mathrm{max},\,2}$ such that a function $\phi\colon\mathbb{R}\to[0,\,1/2]$ satisfying condition \eqref{eq:asymptoticExactness} exists.
\end{theorem}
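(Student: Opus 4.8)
The plan is to track the feasibility iterates with the shifted indexing $w_0=\hat{w}$, $w_1=\bar{w}$, $w^*=\tilde{w}$ announced before the theorem. This is legitimate because the first parametric program has $\delta_0=\delta(\hat{w},\hat{w})=0$, so $\mathrm{PLP}(\delta_0;\hat{w},\Delta)$ coincides with \eqref{FP-QP} and its solution is exactly $\bar{w}$. Writing $D:=\Vert\bar{w}-\hat{w}\Vert_2=\Vert w_1-w_0\Vert$, the quantity to be controlled is $\Vert\bar{w}-\tilde{w}\Vert/\Vert\bar{w}-\hat{w}\Vert=\Vert w_1-w^*\Vert/D$. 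The idea is to bound $\Vert w_1-w^*\Vert$ by the tail of a geometric sequence whose ratio is proportional to $D$ rather than to the trust-region radius $\Delta$; that bound then furnishes $\phi$ as an explicit function of $\Vert\bar{w}-\hat{w}\Vert_2$. Throughout I abbreviate $q:=L_1\tilde{L}$, with $L_1$ from Assumption \ref{as:invertiblePLP} and $\tilde{L}$ the Lipschitz constant of $\tilde{G}$.

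First I would establish a per-step contraction on consecutive differences. Since $w_{l+1}=w_{\mathrm{PLP}}^*(\delta_l;\hat{w},\Delta)$ and $w_l=w_{\mathrm{PLP}}^*(\delta_{l-1};\hat{w},\Delta)$ for $l\geq 1$, Assumption \ref{as:invertiblePLP} gives $\Vert w_{l+1}-w_l\Vert\leq L_1\Vert\delta_l-\delta_{l-1}\Vert$ whenever both residuals lie in $\mathcal{B}(0,L_2\Delta)$. Exactly as in the proof of Theorem \ref{theo:localContractionJosephyNewton}, the fundamental theorem of calculus together with Lipschitz continuity of $\tilde{G}$ yields $\Vert\delta_l-\delta_{l-1}\Vert\leq\tilde{L}\,\big(\sup_{t\in[0,1]}\Vert w_{l-1}+t(w_l-w_{l-1})-\hat{w}\Vert\big)\,\Vert w_l-w_{l-1}\Vert$, and by convexity the supremum is at most $\max(\Vert w_{l-1}-\hat{w}\Vert,\Vert w_l-\hat{w}\Vert)$. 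The crucial point, and the place where I depart from Theorem \ref{theo:localContractionJosephyNewton}, is to bound this path distance not by the loose trust-region radius $\Delta$ but by a multiple of $D$.

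Next I would close a bootstrap induction showing $\Vert w_l-\hat{w}\Vert\leq 2D$ for every $l$, so that the contraction factor becomes $c:=2qD$. The cases $l=0,1$ are immediate. Assuming the bound through index $l$, the step estimate gives $\Vert w_{l+1}-w_l\Vert\leq 2qD\,\Vert w_l-w_{l-1}\Vert$, hence $\Vert w_{j+1}-w_j\Vert\leq c^{\,j}D$ and $\Vert w_{l+1}-\hat{w}\Vert\leq\sum_{j=0}^{l}c^{\,j}D\leq D/(1-c)$, which is at most $2D$ as soon as $c\leq\tfrac12$, i.e.\ $D\leq 1/(4q)$. Using $D\leq\xi\Delta$ from \eqref{eq:equivalece}, this closing condition together with the residual containment $\delta_l\in\mathcal{B}(0,L_2\Delta)$ required by Assumption \ref{as:invertiblePLP} reduces to finitely many smallness conditions on $\Delta$; I would take $\Delta_{\mathrm{max},2}$ to be the smallest of the corresponding thresholds. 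Convergence of $\{w_l\}$ to $w^*=\tilde{w}\in\mathcal{F}$ then either follows from Theorem \ref{theo:localContractionJosephyNewton} or, self-containedly, from summability of the $c^{\,j}D$.

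Finally, summing the tail gives $\Vert\bar{w}-\tilde{w}\Vert=\Vert w_1-w^*\Vert\leq\sum_{l\geq 1}c^{\,l}D=cD/(1-c)$, so that
\[
\frac{\Vert\bar{w}-\tilde{w}\Vert_2}{\Vert\bar{w}-\hat{w}\Vert_2}\leq\frac{c}{1-c}=\frac{2qD}{1-2qD}=:\phi\big(\Vert\bar{w}-\hat{w}\Vert_2\big).
\]
I would then verify the required properties: $\phi(0)=0$, $\phi$ is continuous and strictly increasing on its domain, and $\phi(D)\leq\tfrac12$ precisely when $6qD\leq 1$; fixing $\Delta_{\mathrm{max},2}$ so that $\Vert\bar{w}-\hat{w}\Vert_2\leq 1/(6q)$ (the binding condition, since it already implies $D\leq 1/(4q)$) gives $\phi\colon[0,\Delta_{\mathrm{max},2}]\to[0,\tfrac12]$ as claimed. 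The main obstacle is the bootstrap itself: the contraction factor and the a priori bound on the iterates depend on one another, and the whole argument hinges on replacing the trust-region bound $\Delta$ used in Theorem \ref{theo:localContractionJosephyNewton} by one proportional to $\Vert\bar{w}-\hat{w}\Vert_2$. Without this refinement $\phi$ could not be expressed as a function of $\Vert\bar{w}-\hat{w}\Vert_2$ in the regime where the trust region is inactive.
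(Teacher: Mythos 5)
Your proof is correct, and it reaches the paper's conclusion by a related but genuinely distinct contraction argument. The paper also telescopes successive differences under Assumption \ref{as:invertiblePLP}, but it extracts the $D$-dependence (where $D=\Vert\bar{w}-\hat{w}\Vert_2$) solely from the \emph{first} step: since $w_0=\hat{w}$, the initial factor is $\kappa_0=\tfrac{1}{2}L\,D$, giving $\Vert w_2-w_1\Vert\leq \tfrac{1}{2}L D^2$, after which all later ratios are bounded crudely by $\kappa_{\max}=2L\Delta$, proportional to the trust-region radius; summing yields $\Vert\tilde{w}-\bar{w}\Vert\leq \tfrac{0.5\,L\,D^2}{1-\kappa_{\max}}$ and hence a $\phi$ linear in $D$ with a $\Delta$-dependent denominator. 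You instead run a bootstrap induction showing $\Vert w_l-\hat{w}\Vert\leq 2D$ for all $l$, so that \emph{every} ratio is $c=2qD$, proportional to $D$ itself; this buys a $\phi(D)=\tfrac{2qD}{1-2qD}$ free of $\Delta$, at the cost of the mutual dependence between the contraction factor and the a priori bound, which your induction resolves cleanly (base cases, per-step contraction via the $\max$-bound on the path distance, geometric summation closing the loop at $c\leq\tfrac12$). Both routes give the same quadratic projection accuracy $\Vert\tilde{w}-\bar{w}\Vert=O(D^2)$, and both defer the residual containment $\delta_l\in\mathcal{B}(0,L_2\Delta)$ to smallness conditions on $\Delta$; you flag this explicitly, which is if anything more careful than the paper. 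One caveat: your closing claim that without the bootstrap $\phi$ could not be expressed as a function of $\Vert\bar{w}-\hat{w}\Vert_2$ when the trust region is inactive is overstated, since the paper's bound becomes a single valid function of $D$ alone after uniformizing the denominator over $\Delta\leq\Delta_{\mathrm{max},\,2}$, i.e., taking $\phi(D)=\tfrac{0.5\,L\,D}{1-2L\Delta_{\mathrm{max},\,2}}$ (the paper's printed denominator $1-2L^2\Delta$ appears to be a typo for $1-2L\Delta$, given $\kappa_{\max}=2L\Delta$). So your refinement sharpens the constants and tidies the dependence on $\Delta$, but it is not strictly necessary for the existence of $\phi$.
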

\begin{proof}
We interpret the solution of \eqref{FP-QP} $\bar{w}$ as the first iterate of Algorithm \ref{alg:zoiterations}, i.e., $w_0=\hat{w}$ and $w_1=\bar{w}$. We also note that $\tilde{w}=w^*$. In the same way, as done in Theorem \ref{theo:localContractionJosephyNewton}, we obtain the following estimate
\begin{align*}
\Vert\delta_{l+1} &-\delta_{l}\Vert\\
&\leq \int_0^1 \tilde{L} \Vert w_l + t(w_{l+1}-w_l) -\hat{w}\Vert \mathrm{d}t\:\Vert w_{l+1}-w_l\Vert\\
&\leq \tilde{L} \int_0^1 \Vert w_l -w_0\Vert + t\Vert w_{l+1}-w_l \Vert \mathrm{d}t\:\Vert w_{l+1}-w_l\Vert\\
&\leq \tilde{L} ( \Vert w_l -w_0\Vert + \frac{1}{2}\Vert w_{l+1}-w_l \Vert )\:\Vert w_{l+1}-w_l\Vert.
\end{align*}
From this follows
\begin{align*}
\Vert w_{l+2} &- w_{l+1}\Vert\\
&\leq L ( \Vert w_l -w_0\Vert + \frac{1}{2}\Vert w_{l+1}-w_l \Vert )\:\Vert w_{l+1}-w_l\Vert.
\end{align*}
For simplicity, we define
\begin{align*}
\kappa_l := L (\Vert w_l -w_0\Vert + \frac{1}{2}\Vert w_{l+1}-w_l \Vert),
\end{align*}
which gives
\begin{align*}
\Vert w_{l+2} - w_{l+1}\Vert \leq \kappa_l\:\Vert w_{l+1}-w_l\Vert.
\end{align*}
We also define $\kappa_{\max}:= 2 L \Delta.$
For the case $l=0$ we get $\kappa_0 = \frac{1}{2}L \Vert w_1-w_0\Vert$ such that
\begin{align*}
\Vert w_2 - w_1\Vert \leq \tau \Vert w_1 - w_0\Vert^2
\end{align*}
where $\tau:=\frac{1}{2}L$. We proceed by a telescoping sum argument. It holds
\begin{align*}
\sum_{l=0}^{K}w_{l+2}-w_{l+1} = w_{K+2}-w_1
\end{align*}
and
\begin{align*}
\Vert w_{l+2}-w_{l+1}\Vert \leq (\kappa_{\max})^l\Vert w_{2}-w_1 \Vert.
\end{align*}
Then,
\begin{align*}
\Vert w_{K+2}-w_1 \Vert &\leq \sum_{l=0}^{K}\Vert w_{l+2}-w_{l+1}\Vert\\
&\leq \sum_{l=0}^{K}(\kappa_{\max})^l\Vert w_{2}-w_1 \Vert\\
&\leq\left (\sum_{l=0}^{K}(\kappa_{\max})^l\right) \tau\Vert w_{1}-w_0\Vert^2\\
&\leq \frac{\tau}{1-\kappa_{\max}} \Vert w_{1}-w_0\Vert^2.
\end{align*}
For $K\to\infty$ and with the original notation of \eqref{eq:asymptoticExactness}, we get
\begin{align*}
\Vert \tilde{w}-\bar{w} \Vert \leq\frac{\tau}{1-\kappa_{\max}}\Vert \bar{w}-\hat{w}\Vert^2.
\end{align*}
From this we define $\phi$ as 
\begin{align*}
\phi(\Vert \bar{w}-\hat{w}\Vert) = \frac{0.5 L \Vert \bar{w}-\hat{w}\Vert}{1-2L^2 \Delta},
\end{align*}
where $\phi$ is always smaller than $\frac{0.5 L \Delta}{1-2L^2 \Delta}$. If we choose $\Delta_{\mathrm{max},\,2} \leq \frac{1}{(1+2L)L}$, then $\phi(\cdot)\leq 1/2$.
\end{proof}

\section{Numerical Example}
\label{sec:numEx}
In this section, we present simulation results for the FSLP algorithm. We first demonstrate the quadratic convergence behavior of FSLP on an illustrative example of a fully determined system. Subsequently, we test FSLP on a time-optimal point-to-point motion problem of an overhead crane. We show, that the theoretically derived contraction properties of the inner feasibility iterations can be empirically verified on the test example and that the desired projection ratio property holds. Finally, we compare FSLP with the state-of-the-art NLP solver \texttt{Ipopt} \cite{Waechter2006} on 100 different problem instances. \texttt{Ipopt} uses the linear solver \texttt{ma57} from the HSL library \cite{HSL}.

\subsection{Implementation}
We use the Python interface of the open source software \texttt{CasADi} \cite{Andersson2019} to model the optimization problem and for a prototypical implementation of the FSLP solver. As LP solver, we use the dual simplex algorithm of \texttt{CPLEX} version 12.8 \cite{cplex2017v12} which can be called from \texttt{CasADi}. The implementation is open-source and can be found at \url{https://github.com/david0oo/fslp}. As parameters in Algorithm \ref{alg:FPSQP} we chose: $\Delta_0=1,\,\alpha_1=\eta_1=0.25,\,\alpha_2=2,\,\eta_2=0.75,\,\sigma=10^{-8},\,\sigma_{\mathrm{outer}}=10^{-8}$. In Algorithm \ref{alg:zoiterations}, we chose $n_{\mathrm{watch}}=5,\,\kappa_{\mathrm{watch}}=0.3$ and $\sigma_{\mathrm{inner}}=10^{-7}$. The simulations were carried out on a Intel Core i7-10810U CPU.

\subsection{Illustrative example of quadratic convergence}
In this subsection, we illustrate the quadratic convergence of the outer iterations of FSLP in the case of a fully determined system. Therefore, we define the following parametric optimization problem
\begin{align}
\label{eq:exampleFullyDetermined}
\min_{w\in\mathbb{R}^2} w_2\quad\mathrm{s.t.}\quad w_2\geq w_1^2,\,w_2 \geq 0.1 w_1+\varepsilon
\end{align}
with $\varepsilon\in\mathbb{R}$.
It is easy to see that the optimal solution of \eqref{eq:exampleFullyDetermined} for $\varepsilon=-0.06$ is $w^*=(0,\,0)$ and for $\varepsilon=0.06$, it is $w^*=(-0.2,\,0.04)$. For $\varepsilon=0.06$, the solution is fully determined by both constraints and lies in a vertex of the linearized constraints at the solution. For $\varepsilon=-0.06$, the solution is not fully determined and does not lie in a vertex of the linearized constraints at the solution.
\begin{figure}[thpb]
\includegraphics[width=0.48\textwidth]{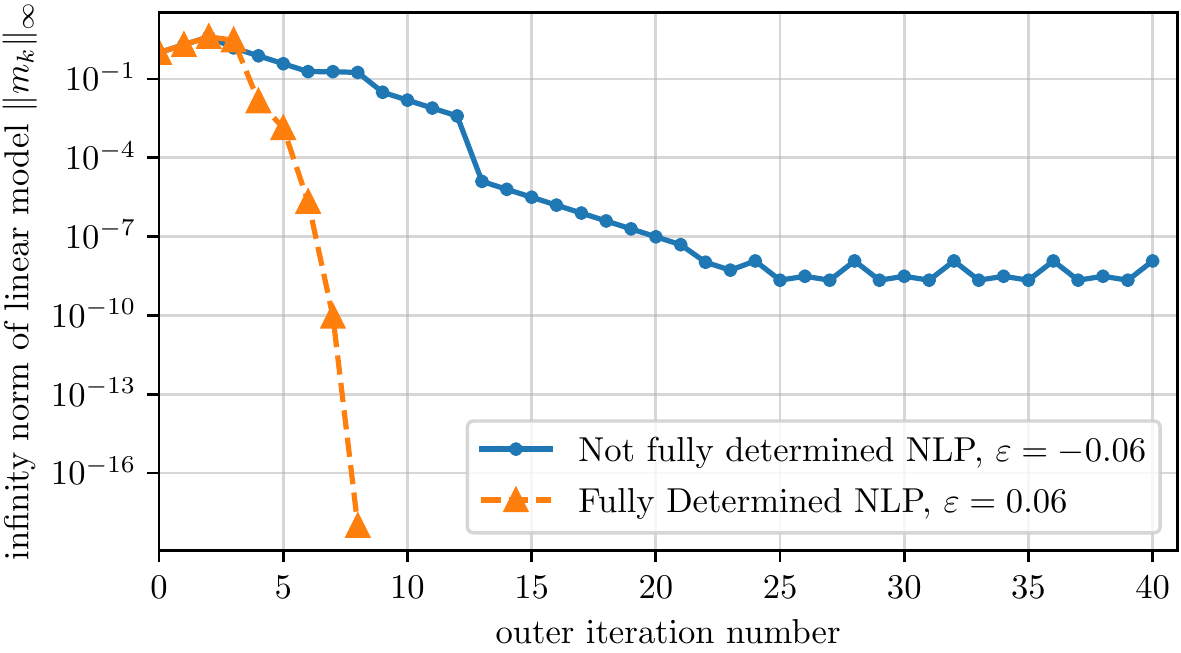}
\vspace{-3mm}
\caption{Comparison of the convergence of FSLP on a fully determined NLP and a not fully determined NLP.}
\label{fig:illustrative_example}
\end{figure}
In Fig. \ref{fig:illustrative_example}, we see that in the case of the fully determined NLP, Algorithm \ref{alg:FPSQP} converges quadratically towards the optimal solution. In the case of the not fully determined problem, the algorithm converges linearly towards the optimal solution, but at a certain accuracy, it is not possible to improve accuracy since all iterates of FSLP lie in a vertex of the boundary of the trust-region. If a step is accepted, the trust-region is increased which results in an overshoot of the steps such that the accuracy decreases again. The convergence results are shown in Fig. \ref{fig:illustrative_example}. The FSLP algorithm was initialized at $\hat{w}_0=(2,\,10)$ for both problems.

\subsection{Point-to-point motion of an overhead crane}
\label{subsec:overheadCrane}
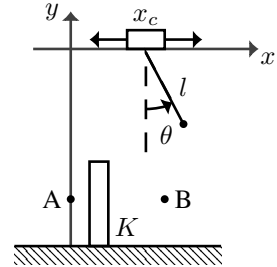
\begin{figure}[thpb]
\begin{center}

\resizebox{0.25\textwidth}{!}{
\begin{tikzpicture}
\draw[-] (-0.3, 0.25) -- (0.8, 0.25) node[left] {};
\draw [draw=none, pattern=north west lines] (-0.3, 0.25) rectangle (0.8, -0.1+0.25);

\draw [] (0.1, 0.25) rectangle (0.2, 1.4-0.7);
\node (A) at (0.3, 0.35) {\scalebox{.4}{$K$}};

\draw [] (0.3, 2-0.7) rectangle (0.5, 2.1-0.7);
\node (A) at (0.4, 1.47) {\scalebox{.4}{$x_c$}};
\draw[-{Latex[length=0.5mm, width=1mm]}] (0.5, 2.1-0.75) -- (0.7, 2.1-0.75) node[left] {};
\draw[{Latex[length=0.5mm, width=1mm]}-] (0.1, 2.1-0.75) -- (0.3, 2.1-0.75) node[left] {};
\draw[-] (0.4, 2-0.72) -- (0.6, 0.9) node[right] {};
\filldraw (0.6,0.9) circle (0.4pt) node[right] {};
\node (B) at (0.6, 1.1) {\scalebox{.4}{$l$}};

\draw[densely dashed]
    (0.4, 0.75) coordinate (a) node[right] {}
    -- (0.4, 2-0.72) coordinate (b) node[left] {}
    -- (0.55,1) coordinate (c) node[above right] {}
    pic["\scalebox{.4}{$\theta$}", draw=black, -{Latex[length=0.5mm, width=1mm]}, angle eccentricity=1.5, angle radius=3mm]
    {angle=a--b--c};
%


\begin{scope}[transparency group, opacity=0.75]
\draw[-{Latex[length=0.5mm, width=1mm]}] (0, 0.25) -- (0, 2-0.7+0.2) node[left] {};
\draw[-{Latex[length=0.5mm, width=1mm]}] (-0.2, 2-0.7) -- (0.6+0.2+0.2, 2-0.7) node[left] {};
\end{scope}
\node (A) at (-0.1, 2-0.7+0.2) {\scalebox{.4}{$y$}};
\node (A) at (0.6+0.2+0.05+0.2, 2-0.7-0.05) {\scalebox{.4}{$x$}};

\filldraw (0,1.2-0.7) circle (0.4pt) node[left] {};
\node (C) at (-0.1, 0.5) {\scalebox{.4}{A}};
\filldraw (0.5,1.2-0.7) circle (0.4pt) node[right] {};
\node (D) at (0.6, 0.5) {\scalebox{.4}{B}};

%

\end{tikzpicture}
}
\end{center}
\vspace{-5mm}
\caption{Schematic illustration of the overhead crane.}
\label{fig:crane}
\vspace{-3mm}
\end{figure}
As main test example, we present the time-optimal point-to-point motion of an overhead crane as shown in Fig. \ref{fig:crane}. The crane can move its position $x_c\, [\mathrm{m}]$ on a rail. The length of the crane hoist is denoted by $l\, [\mathrm{m}]$ and its angle with respect to the rail by $\theta\, [\mathrm{rad}]$. The payload of the crane has the position $p = (x_c + l\sin(\theta), -l\cos(\theta))$. The control inputs are the acceleration of the cart $\ddot{x}_c\, [\mathrm{m/s^2}]$ and the winding acceleration $\ddot{l}\, [\mathrm{m/s^2}]$ of the hoist. Our goal is to move the payload from point of rest A to point of rest B in minimal time $t\, [s]$. The system dynamics are given by
\begin{align*}
l\ddot{\theta} = \cos(\theta)\ddot{x}_c - 2\dot{l}\dot{\theta} -g\sin(\theta),
\end{align*}
where $g=9.81 \mathrm{kg\,m/s^2}$ is the gravitational acceleration. The rectangular obstacle $K\subset\mathbb{R}^2$ in Fig. \ref{fig:crane} is represented by its vertices $v_1,\,v_2,\,v_3,\,v_4\in\mathbb{R}^2$. In order to model obstacle avoidance, we apply the separating hyperplane theorem \cite{Boyd2004}. We introduce the radius of the load $r_{\mathrm{load}}\,[\mathrm{m}]$ and for every time instant $k=1,\ldots,\,N$, we introduce hyperplane variables $u^{\mathrm{h}}_k\in\mathbb{R}^2$, $u^{\mathrm{c}}_k\in\mathbb{R}$ that separate the payload from the obstacle. The constraints are of the following form:
\begin{align*}
&p_k^{\top}u^{\mathrm{h}}_k - u^{\mathrm{c}}_k \leq -r_{\mathrm{load}},\\
&v_i^{\top}u^{\mathrm{h}}_k - u^{\mathrm{c}}_k\geq 0\quad\forall\: i=1,\ldots,\,4,\\
&\Vert u_k^h \Vert_{\infty}\leq 1,\quad \Vert u_k^c \Vert_{\infty}\leq 1
\end{align*}
for all $\quad k=1,\ldots,\,N$.
Algorithm \ref{alg:FPSQP} needs to be initialized with a feasible trajectory. Since it is difficult to provide an initial guess satisfying the start and end position constraints, we have to relax both conditions, i.e, 
\begin{align*}
-s_0 \leq x_0 - \bar{x}_0 \leq s_0,\quad -s_f \leq x_N - \bar{x}_f \leq s_f,
\end{align*}
with $s_0,\,s_f\in\mathbb{R}^{n_x}_{\geq 0}$. The slack variables are penalized by the factor $10^5$ in the objective. Furthermore, $N=20$ and $r_{\mathrm{load}} =0.08\,\mathrm{m}$. Table \ref{tab:parameters} shows the parameters for the box constraints on the variables. The system dynamics were integrated with a Runge Kutta 4 scheme with 20 internal steps per multiple shooting interval. Altogether, we obtain a TOCP formulation as in \eqref{OCP}.

In the simulations, we initialize with the trajectory of iteration zero as in Fig. \ref{fig:opt_trajectories}. These can be created by initializing with initial time $2.5\,\mathrm{s}$, i.e., $h=0.125\,\mathrm{s}$ and forward simulating the system dynamics with the constant control $(0\,\mathrm{m/s^2},\,0.1\,\mathrm{m/s^2})$ from the initial payload position $(0\,\mathrm{m},\,-0.6\,\mathrm{m})$. The trajectories of some iterates of FSLP are shown in Fig. \ref{fig:opt_trajectories}. FSLP terminates after eleven iterations at the optimal solution. After three iterations, the starting condition is already satisfied, after six iterations the start and end conditions are satisfied, i.e., a zero slack feasible solution is found, and after nine iterations, the suboptimal iterate and the optimal solution are barely distinguishable. In the following, we always mean a zero slack feasible solution, if a feasible solution is mentioned.

\begin{figure}[thpb]
\includegraphics[width=0.485\textwidth]{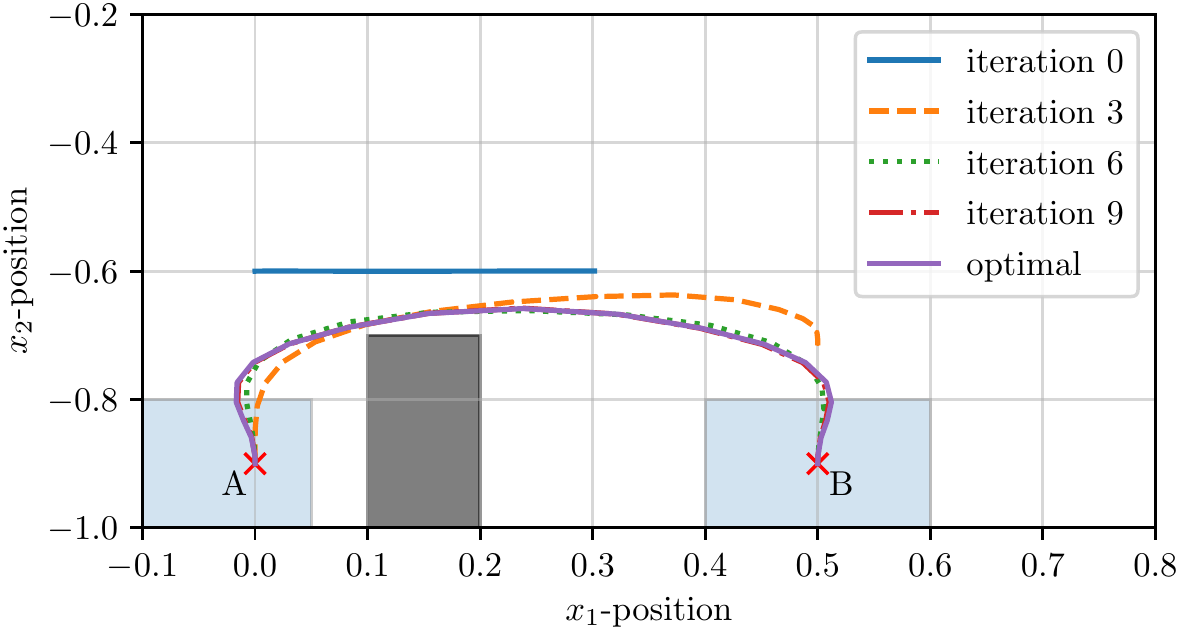}
\vspace{-5mm} 
\caption{Payload trajectories of the iterates of FSLP on the overhead crane problem.}
\label{fig:opt_trajectories}
\vspace{-3mm} 
\end{figure}

\begin{table}[thpb]
\caption{Boundaries of variables.}
\label{tab:parameters}
\centering
\begin{tabular}{|c|c|c|}
\hline
Lower Bound & Description & Upper Bound\\
\hline
$-0.1\,\mathrm{m}$ & $x_c$ & $0.6\,\mathrm{m}$\\
$-0.4\,\mathrm{m/s}$ & $\dot{x}_c$ & $0.4\,\mathrm{m/s}$\\
$10^{-2}\,\mathrm{m}$ & $l$ & $2\,\mathrm{m}$\\
$-0.25\,\mathrm{m/s}$ & $\dot{l}$ & $0.25\,\mathrm{m/s}$\\
$-0.75\,\mathrm{rad}$ & $\theta$ & $0.75\,\mathrm{rad}$\\
$-5\,\mathrm{m/s^2}$ & $\ddot{x}_c$ & $5\,\mathrm{m/s^2}$\\
$-5\,\mathrm{m/s^2}$ & $\ddot{l}$ & $5\,\mathrm{m/s^2}$\\
$0$ & $s_0,\,s_N$ & $+\infty$\\
\hline
\end{tabular}
\vspace{-2mm}
\end{table}

\subsubsection{Local contraction and projection ratio}
At first, we investigate the local convergence behavior of the inner iterations for different trust-region radii. As an example problem, we use the first outer iteration of FSLP initialized with the trajectory of iteration zero as shown in Fig. \ref{fig:opt_trajectories}. In Fig. \ref{fig:localConvergence}, we observe the contraction and projection ratio for different trust-region radii. We see that the iterates converge locally and fulfill \eqref{eq:asymptoticExactness} for trust-region radii below a certain threshold which confirms the theoretical results of subsections \ref{subsec:localconvergence} and \ref{subsec:asExac}.

\begin{figure}[thpb]
\includegraphics[width=0.485\textwidth]{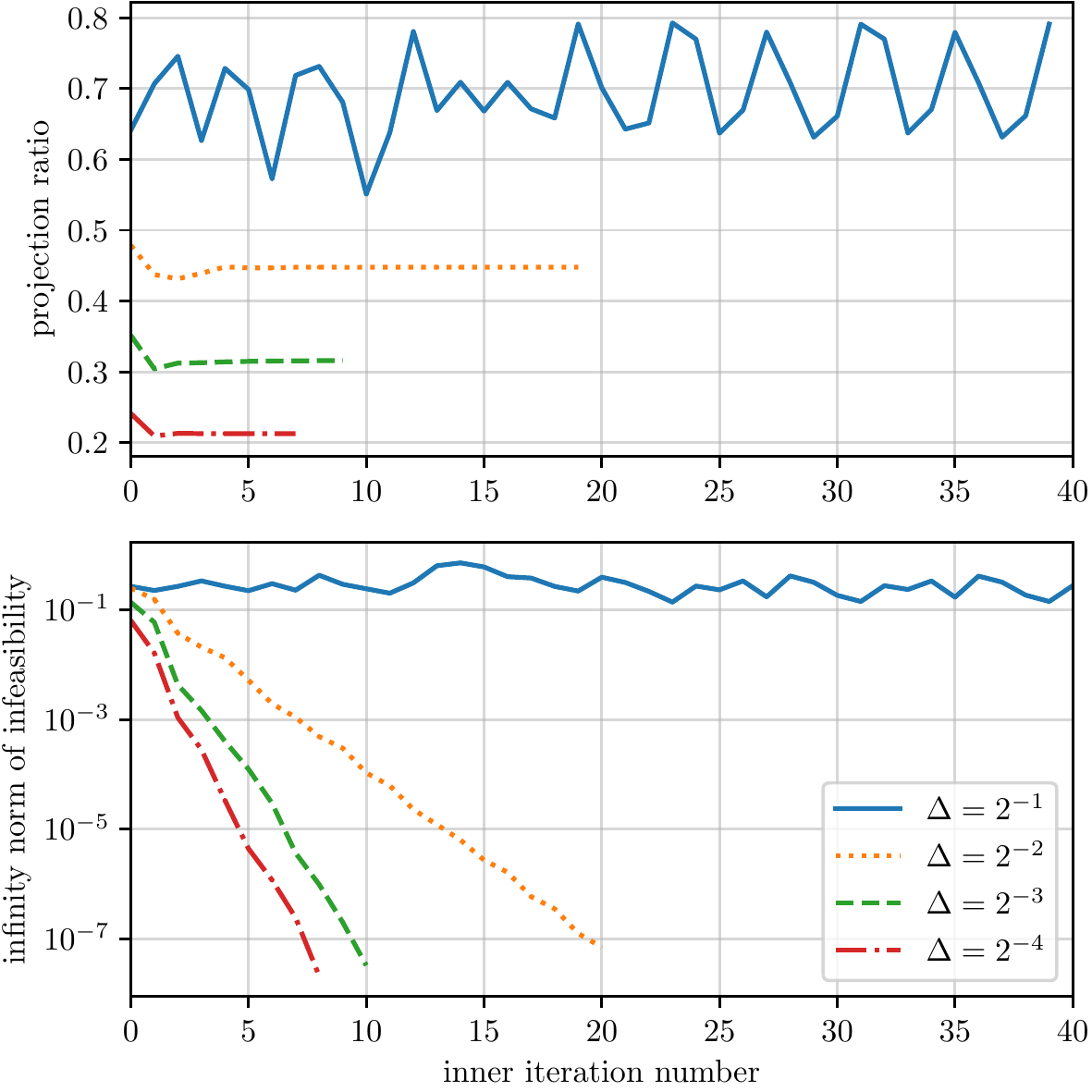}
\vspace{-5mm}
\caption{Comparison of local convergence and projection ratio of feasibility iterations for different trust-region radii.  For all trust-region radii below a certain value between $2^{-1}$ and $2^{-2}$ we obtain linear convergence and satisfaction of condition \eqref{eq:asymptoticExactness}.}
\label{fig:localConvergence}
\vspace{-4mm}
\end{figure}
\subsubsection{Comparison to Ipopt}
In the next experiment, we take ten random perturbations of starting position A and ten perturbations of the end position B resulting in 100 different optimization problems. The perturbations are chosen to be uniformly distributed over the blue areas in Fig. \ref{fig:opt_trajectories}. The performance of FSLP and \texttt{Ipopt} is presented in Fig. \ref{fig:performancePlot}. \texttt{Ipopt} - feasible solution denotes the value at the iteration from which all subsequent iterations stay below the given feasibility tolerance. We can see that the SLP algorithm needs in almost all cases fewer outer iterations than \texttt{Ipopt} to solve the TOCP problems. In contrast, FSLP needs more constraint evaluations due to the inner feasibility iterations. Since FSLP needs fewer outer iterations than \texttt{Ipopt}, FSLP needs fewer evaluations of derivative information and especially no evaluations of the Hessian. Stopping FSLP at a suboptimal point satisfying start and end position constraints reduces the constraint evaluations and outer iterations. 

\begin{figure}[thpb]
\includegraphics[width=0.485\textwidth]{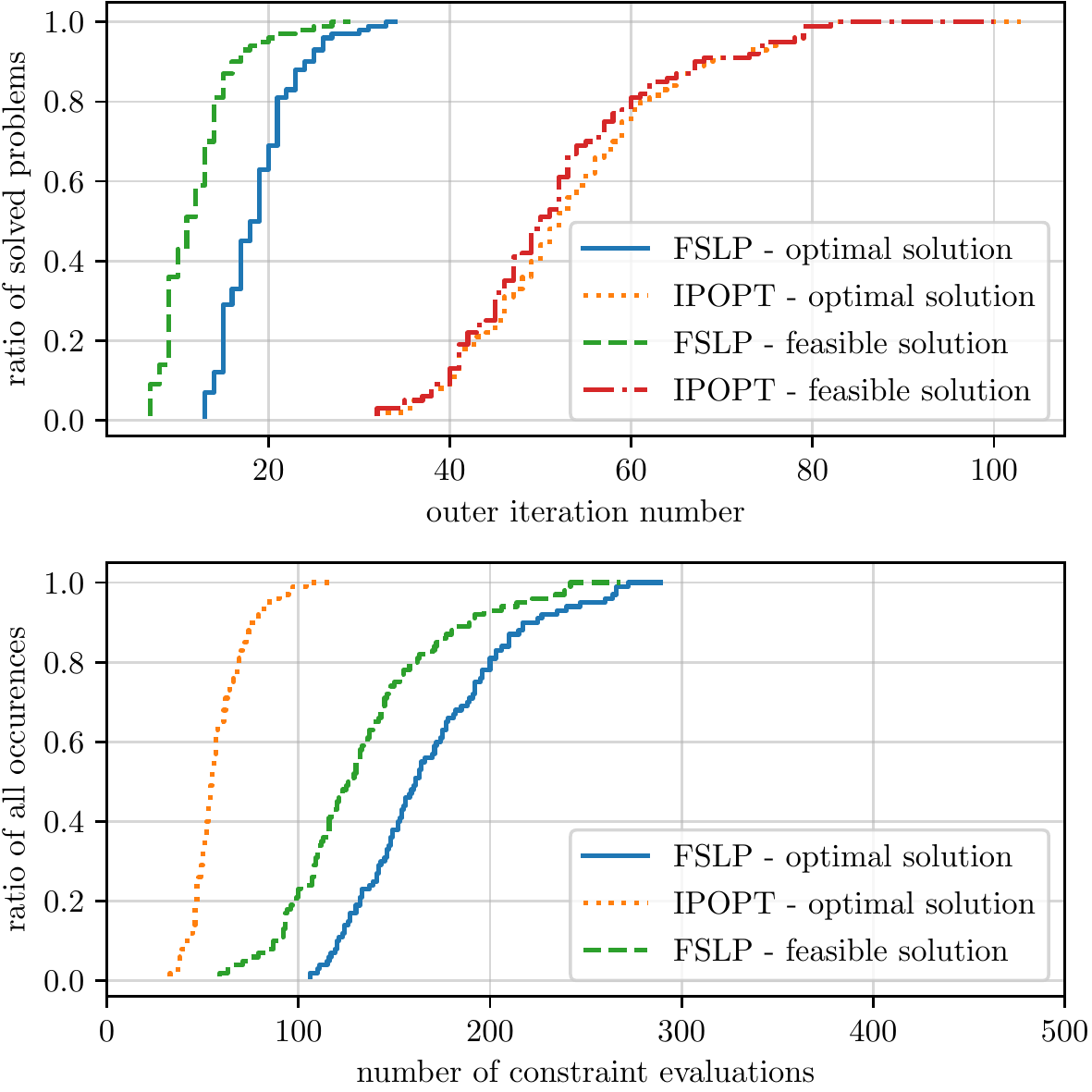}
\vspace{-5mm}
\caption{Comparison of outer iterations and constraint evaluations of FSLP and IPOPT on 100 different overhead crane problems.}
\label{fig:performancePlot}
\vspace{-7mm}
\end{figure}

Additionally, we timed the wall time of the Python FSLP prototype against \texttt{Ipopt} for solving an instance of the 100 different test problems. One big difference between FSLP and \texttt{Ipopt} is the use of derivative information, therefore we investigate the wall times on the 100 different problems for different numbers of internal discretization steps in the Runge Kutta scheme. The more steps are used, the more expensive it gets to evaluate the constraints and especially the derivatives. The results are illustrated in a boxplot in Fig. \ref{fig:walltimePlot}. We see that the run time of the prototypical implementation scales better than the run time of \texttt{Ipopt} for increasing numbers of discretization steps. These are promising results for future research and for an efficient implementation for real-time optimization by exploiting problem structure.

At last, we remark that the initial time was chosen close to the optimal time of the original overhead crane problem in Subsection \ref{subsec:overheadCrane}. If the initial time is chosen larger, then a zero-slack feasible solution can be reached faster due to more freedom in feasible trajectories, but the run time to find an optimal solution will in general increase.

\begin{figure}[thpb]
\includegraphics[width=0.485\textwidth]{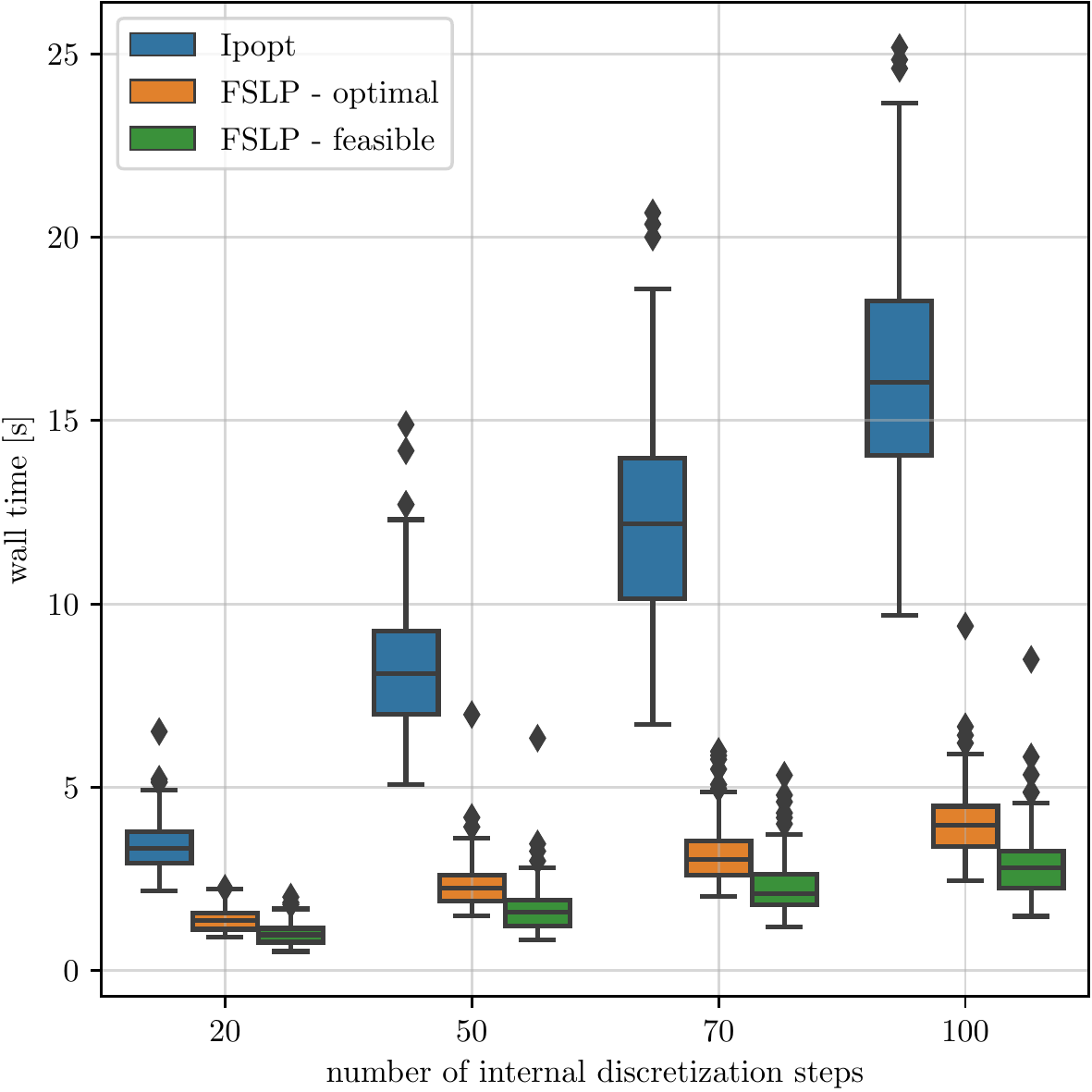}
\vspace{-5mm}
\caption{Comparison of wall times of FSLP and IPOPT on 100 different overhead crane problems for different number of discretization steps in the Runge Kutta scheme.}
\label{fig:walltimePlot}
\vspace{-6mm}
\end{figure}



\section{Conclusion}
\label{sec:conclusion}
\color{black}
In this paper, we proposed a novel globally convergent feasible sequential linear programming algorithm for time-optimal control problems. In the case of a fully determined system, we even obtain quadratic local convergence. We show that the algorithm can maintain feasible iterates also for problems with nonlinear constraints. In a numerical case study, the performance of the algorithm and its potential to stop iterations early was demonstrated. 

%


\bibliography{bibliography}
\bibliographystyle{plain} 

%
%
%

\end{document}